\newtheorem{theorem}{Theorem}[section]
\newtheorem{lemma}[theorem]{Lemma}
\newtheorem{proposition}[theorem]{Proposition}
\newtheorem{definition}[theorem]{Definition}
\newtheorem{remark}[theorem]{Remark}
\journal{Transportation Research Part B}
\begin{document}

\begin{frontmatter}



 \begin{center}
\textcolor{blue}{ARTICLE LINK:  http://www.sciencedirect.com/science/article/pii/S0191261513000209
\\  PLEASE CITE THIS ARTICLE AS\\ 
Han, K., Friesz, T.L., Yao, T., 2013. Existence of simultaneous route and departure choice dynamic user equilibrium. Transportation Research Part B 53, 17-30.}
 \line(1,0){469}
 \end{center}

\title{Existence of Simultaneous Route and Departure Choice Dynamic User Equilibrium\tnoteref{t1}}

\tnotetext[t1]{This work is partially supported by NSF through grant EFRI-1024707, ``A theory of complex transportation network design".}

\author[math]{Ke Han}
\ead{kxh323@psu.edu}

\author[ie]{Terry L. Friesz\corref{cor}}
\ead{tfriesz@psu.edu}

\author[ie]{Tao Yao}
\ead{tyy1@engr.psu.edu}

\cortext[cor]{Corresponding author}

\address[math]{Department of Mathematics, Pennsylvania State University, PA 16802, USA.}
\address[ie]{Department of Industrial and Manufacturing Engineering, Pennsylvania State University, PA 16802, USA.}

\begin{abstract}
This paper is concerned with the existence of the \textit{simultaneous
route-and-departure choice dynamic user equilibrium} (SRDC-DUE) in continuous time.
The SRDC-DUE problem was formulated as an infinite-dimensional variational
inequality in \cite{Friesz1993}. In deriving our existence result, we employ
the \textit{generalized Vickrey model} (GVM)  introduced in \cite{HFY1, HFY2}
 to formulate the underlying network loading problem. As we explain, the GVM
corresponds to a path delay operator that is provably strongly continuous on
the Hilbert space of interest. Finally, we provide the desired SRDC-DUE
existence result for general constraints relating path flows to a table of
fixed trip volumes without invocation of {\it a priori} bounds on the
path flows.
\end{abstract}

\begin{keyword}

simultaneous route-and-departure choice dynamic user equilibrium \sep  existence   \sep the generalized Vickrey model  \sep effective delay operator  
\end{keyword}

\end{frontmatter}

\section{\label{Intro}Introduction}

In this paper we shall consider \textit{dynamic traffic assignment} (DTA) to
be the positive (descriptive) modeling of time-varying flows of automobiles
on road networks consistent with established traffic flow theory and travel
demand theory. \textit{Dynamic User Equilibrium} (DUE) is one type of DTA
wherein effective unit travel delay for the same purpose is identical for
all utilized path and departure time pairs. The relevant notion of travel
delay is effective unit travel delay, which is the sum of arrival penalties
and actual travel time. For our purposes in this paper, DUE is modeled for
the within-day time scale based on fixed travel demands.

In the last two decades there have been many efforts to develop a
theoretically sound formulation of dynamic network user equilibrium that is
also a canonical form acceptable to scholars and practitioners alike. DUE
models tend to be comprised of four essential sub-models:

\begin{enumerate}
\item a model of path delay;

\item flow dynamics;

\item flow propagation constraints;

\item a path/departure-time choice model.
\end{enumerate}

\noindent Furthermore, analytical DUE models tend to be of two varieties:
(1) \textit{route choice} (RC) user equilibrium \citep{Friesz1989, MNa, MNb, Mounce2006, SW2, ZM}; and (2) \textit{simultaneous route-and-departure} \textit{choice} (SRDC) dynamic user equilibrium \citep{Friesz1993, FBST, FKKR, LWRDUE, RHB, WTC}. For both types of DUE models, the existence of a dynamic user equilibrium in continuous time remains a
fundamental issue. A proof of DUE existence is a necessary foundation for
qualitative analysis and computational studies. In this paper, we provide a
DUE existence result for the SRDC DUE problem when it is formulated as an
infinite-dimensional variational inequality of the type presented in \cite{Friesz1993}. In this paper, in order to establish a DUE existence result,
we study the network loading problem based on the generalized Vickrey model (GVM) proposed in \cite{HFY1, HFY2}. All of our results
presented in this paper are more general than any obtained previously for
DUE when some version of the point queue model is employed.

\subsection{Formulation of the SRDC user equilibrium}

\label{introformulation}

There are two essential components within the RC or SRDC notions of DUE: (i)
the mathematical expression of Nash-like equilibrium conditions, and (ii) a
network performance model, which is, in effect, an embedded network loading
problem. The embedded network loading problem captures the relationships
among arc entry flow, arc exit flow, arc delay and path delay for any path
departure rate trajectory. 


There are multiple means of expressing the Nash-like notion of a dynamic
equilibrium, including the following:

\begin{enumerate}
\item a variational inequality \citep{Friesz1993, SW1, SW2}

\item an evolution equation in an appropriate
function space \citep{Mounce2006, SW2}

\item a nonlinear complementarity problem \citep{WTC, HUD}

\item a differential variational inequality \citep{FBST, FKKR, LWRDUE, Friesz and Mookherjee}; and

\item a differential complementarity system \citep{Pang}.
\end{enumerate}

\noindent The variational inequality representation is presently the primary
mathematical form employed for both RC and SRDC DUE. The most obvious
approach to establishing existence for any of the mathematical
representations mentioned above is to convert the problem to an equivalent
fixed point problem and then apply Brouwer's fixed point existence theorem.
Alternatively, one may use an existence theorem for the particular
mathematical representation selected; it should be noted that most such
theorems are derived by using Brouwer's famous theorem. So, in effect, all
proofs of DUE existence employ Brouwer's fixed point theorem, either
implicitly or explicitly. One statement of Brouwer's theorem appears as
Theorem 2 of \cite{Browder}. Approaches based on Brouwer's theorem require
the set of feasible path flows (departure rates) under consideration to be
compact and convex in a Banach space, and typically involve an {\it a
priori} bound on all the path flows.

We also wish to point out that this paper employs much more general
constraints relating path flows to a table of fixed trip volumes than has
been previously considered when studying SRDC-DUE. Moreover, in our study of
existence, we do not invoke {\it a priori} bounds on the path flows to
assure boundedness needed for application of Brouwer's theorem. That is, a
goal of this paper is to investigate the existence of DUE without making the
assumption of \textit{a priori} bounds for departure rates. Note should be
taken of the following fact: the boundedness assumption is less of an issue
for the RC DUE by virtue of problem formulation; that is, for RC\ DUE, the
travel demand constraints are of the following form: 
\begin{equation}
\sum_{p\in \mathcal{P}_{ij}}h_{p}(t)~=R_{ij}(t)\qquad \forall ~t,\quad
\forall ~(i,\,j)\in \mathcal{W}  \label{introeqn1}
\end{equation}%
where $\mathcal{W}$ is the set of origin-destination pairs, $\mathcal{P}_{ij}
$ is the set of paths connecting $\left( i,j\right) \in \mathcal{W}$ and $
h_{p}(t)$ is the departure rate along path $p$. Furthermore, $R_{ij}(t)$
represents the rate (not volume) at which travelers leave origin $i$ with
the intent of reaching destination $j$ at time $t$; each such trip rate is
assumed to be bounded from above. Since (\ref{introeqn1}) is imposed
pointwise and every path flow $h_{p}$ is nonnegative, we are assured that
each $h=\left( h_{p}:~p\in \mathcal{P}_{ij},~\left(i,\,j\right) \in \mathcal{W}\right) $ are automatically uniformly bounded. On the other hand, the
SRDC user equilibrium imposes the following constraints on path flows: 
\begin{equation}
\sum_{p\in \mathcal{P}_{ij}}\int_{t_{0}}^{t_{f}}h_{p}(t)\,dt~=~Q_{ij}\qquad
\forall ~(i,\,j)\in \mathcal{W}  \label{introeqn2}
\end{equation}%
where $Q_{ij}\in \mathcal{\Re }_{+}^{1}$ is the volume (not rate) of
travelers departing node $i$ with the intent of reaching node $j$. The
integrals in (\ref{introeqn2}) are interpreted as Legesgue; hence, (\ref%
{introeqn2}) alone is not enough to assure bounded path flows. This
observation has been the major hurdle to proving existence without the 
{\it a priori} invocation of bounds on path flows. In this paper, we will
overcome this difficulty through careful analysis of the GVM and by
investigating the effect of user behavior in shaping network flows, in a
mathematically intuitive yet rigorous way.

\subsection{Importance of the path delay operator}

Clearly another key component of continuous-time DUE is the path delay
operator, typically obtained from \textit{dynamic network loading} (DNL),
which is a subproblem of a complete DUE model\footnote{%
Note that, by referring to the network loading procedure, we are neither
employing nor suggesting a sequential approach to the study and computation
of DUE. Rather a subset of the equations and inequalites comprising a
complete DUE model may be grouped in a way that identifies a traffic
assignment subproblem and a network loading subproblem. Such a grouping and
choice of names is merely a matter of convenient language that avoids
repetitive reference to the same mathematical expressions. Use of such
language does not alter the need to solve both the assignment and loading
problems consistently and, thus, simultaneously. A careful reading of the
mathematical presentation made in subsequent sections makes this quite clear.%
}. Any DNL must be consistent with the established path flows and link delay
model, and DNL is usually performed under the \textit{first-in-first-out}
(FIFO) rule. The properties of the delay operator are critical to proving
existence of a solution to the infinite-dimensional variational inequality
used to express DUE. In \cite{ZM}, using the \textit{link delay model}
introduced by \cite{Friesz1993}, the authors showed weak continuity of the
path delay operator under the assumption that the path flows are \textit{a
priori} bounded. Their continuity result is superceded  by a
more general result proven in this paper: the path delay operator of
interest is strongly continuous without the assumption of boundedness.
Strong continuity without boundedness is central to our proof of existence
in the present paper. 

In this paper, as a foundation for DNL, we will consider the Vickrey model of
congestion first introduced by \cite{Vickrey} and later studied by  \cite{HFY1, HFY2}. The Vickrey model for a single link is primarily described by an 
\textit{ordinary differential equation} (ODE) with discontinuous right hand
side. Such irregularity has made it difficult to analyze the Vickery model in
continuous time. Fortunately, in this paper, we will be able to take
advantage of the closed-form reformulation proposed in \cite{HFY1, HFY2}, then prove the strong
continuity of the path delay operator without boundedness of the path flows. This will provide  a quite general existence proof for SRDC-DUE based on the generalized Vickrey model.

\subsection{Organization}

The balance of this paper is organized as follows. Section \ref{Prel}
provides essential mathematical background on the concepts that will be used
in the paper. Section \ref{DUE} briefly reviews the formal definition of
dynamic user equilibrium and its formulation as a variational inequality.
Section \ref{DNL} recaps the generalized Vickrey model (GVM) originally
put forward by \cite{HFY1, HFY2}. Section \ref{dueexistence} formally
discusses the properties of the effective delay operator. The main result of
this paper, the existence of an SRDC-DUE when the GVM informs network
loading is established in Theorem \ref{gvmthm} of Section \ref{visec}.

\section{\label{Prel} Mathematical preliminaries}

A \textit{topological vector space} is one of the basic structures
investigated in functional analysis. Such a space blends a topological
structure with the algebraic concept of a vector space. The following is a
precise definition.

\begin{definition}
\label{tvdef} \textbf{(Topological vector space)} A topological vector space 
$X$ is a vector space over a topological field $\mathbb{F}$ (usually the
field of real or complex numbers with their standard topologies) which is endowed
with a topology such that vector addition $X\times X\rightarrow X$ and scalar
multiplication $\mathbb{F}\times X\rightarrow X$ are continuous functions. 
\end{definition}

\noindent As a consequence of Definition \ref{tvdef}, all normed vector
spaces, and therefore all Banach spaces and Hilbert spaces, are examples of
topological vector spaces. Also important is the notion of a seminorm:

\begin{definition}
\label{seminormdef} \textbf{(Seminorm)} A seminorm on a vector space $X$ is
a real-valued function $p$ on $X$ such that

\begin{itemize}
\item[(a)] $p(x+y)~\leq~p(x)+p(y)$

\item[(b)] $p(\alpha\,x)~=~|\alpha|\,p(x)$
\end{itemize}

for all $x$ and $y$ in $X$ and all scalars $\alpha$.
\end{definition}

\begin{definition}
\label{lctvdef} \textbf{(Locally convex space)} A locally convex space is
defined to be a vector space $X$ along with a family of seminorms $%
\{p_i\}_{i\in\mathcal{I}}$ on $X$.
\end{definition}

\noindent As part of our review we make note of the following essential
knowledge:

\begin{flushleft}
\textbf{Fact 1.} The space of square-integrable real-valued functions on a
compact interval $[a,\,b]$, denoted by $\mathcal{L}^{2}[a,\,b]$, is a
locally convex topological vector space.

\medskip \noindent \textbf{Fact 2.} The $m$-fold product of the spaces of
square-integrable functions $\big(\mathcal{L}^{2}[a,\,b]\big)^{m}$ is a locally
convex topological vector space.
\end{flushleft}

\begin{definition}
\label{dualdef} \textbf{(Dual space)} The dual space $X^{\ast }$ of a vector
space $X$ is the space of all continuous linear functions on $X$.
\end{definition}

\noindent Given a vector space $X$, let $\varphi\in X^*$ be a continuous linear function on $X$, then we use $\left<\cdot,\,\cdot\right>$ to denote the duality pairing of $X$ with its dual space $X^*$, that is
$$
\left<\varphi,\,x\right>~\doteq~\varphi(x)\qquad\forall x\in X
$$

\noindent Another key property we consider without proof is:

\begin{flushleft}
\textbf{Fact 3.} The dual space of $\mathcal{L}^{p}[a,\,b]$ for $1<p<\infty $ has a
natural isomorphism with $\mathcal{L}^{q}[a,\,b]$ where $q$ is such that $1/p+1/q=1$.
In particular, the dual space of $\mathcal{L}^{2}[a,\,b]$ is again $%
\mathcal{L}^{2}[a,\,b]$.
\end{flushleft}

\noindent Let us now give the formal definition of a variational inequality
in a topological setting:

\begin{definition}
\textbf{(Infinite-Dimensional Variational inequality)} Let $V$ be a
topological vector space and $F:\,U \rightarrow V^*$, where $U\subset V$. The infinite-dimensional variational inequality is
posed as the following problem 
\begin{equation}
\left. 
\begin{array}{c}
\hbox{find}~u^{\ast}\in U ~~\hbox{such that} \\ 
\left\langle F(u^*),\,u-u^{\ast }\right\rangle ~\geq ~0~~\forall ~u\in U%
\end{array}
\right\} ~VI(F,\,U)  \label{videf}
\end{equation}
\end{definition}

\noindent The key foundation for analysis of existence is the following
theorem given in \cite{Browder}:

\begin{theorem}
\label{mainthm} Let $K$ be a compact convex subset of the locally convex
topological vector space $E$, $T$ a continuous (single-valued) mapping of $K$
into $E^{\ast }$. Then there exits $u_{0}$ in $K$ such that 
\begin{equation*}
\Big<T(u_{0}),\,u-u_0\Big>~\geq ~0
\end{equation*}%
for all $u\in K$.
\end{theorem}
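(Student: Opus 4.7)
The plan is to prove Theorem \ref{mainthm} via the Ky Fan KKM principle, which is the standard route to Browder-type variational inequality results on a compact convex set in a locally convex topological vector space. For each $v\in K$, I would introduce the set
\begin{equation*}
S(v)~=~\Big\{u\in K\,:\,\big\langle T(u),\,u-v\big\rangle\,\geq\,0\Big\},
\end{equation*}
so that the conclusion of the theorem is exactly the statement that $\bigcap_{v\in K}S(v)$ is nonempty. Any point $u_0$ in this intersection serves as the desired element, since $u_0\in S(v)$ for every $v\in K$ is precisely the inequality $\langle T(u_0),\,u_0-u\rangle\geq 0$ for all $u\in K$.

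The first main step is to verify the KKM condition for the family $\{S(v)\}_{v\in K}$: for any finite collection $v_1,\ldots,v_n\in K$, the convex hull $\mathrm{conv}\{v_1,\ldots,v_n\}$ is contained in $\bigcup_{i=1}^n S(v_i)$. Given any $u=\sum_{i=1}^n \lambda_i v_i$ with $\lambda_i\geq 0$ and $\sum_i\lambda_i=1$, linearity of the duality pairing yields
\begin{equation*}
\sum_{i=1}^n \lambda_i\,\big\langle T(u),\,u-v_i\big\rangle~=~\big\langle T(u),\,u-u\big\rangle~=~0.
\end{equation*}
Because the $\lambda_i$ are nonnegative and sum to one, at least one summand must satisfy $\langle T(u),\,u-v_i\rangle\geq 0$; equivalently $u\in S(v_i)$ for some $i$, establishing the KKM property.

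The second main step is closedness of each $S(v)$. The set $S(v)$ is the preimage of $[0,\infty)$ under the scalar-valued map $\phi_v\colon u\mapsto \langle T(u),\,u-v\rangle$, and continuity of $\phi_v$ on the compact set $K$ follows from the assumed continuity of $T\colon K\to E^{\ast}$ together with continuity of the duality pairing. Since $K$ is compact, each $S(v)$ is then a compact subset of $K$. The KKM property established above immediately yields the finite intersection property for $\{S(v)\}_{v\in K}$, and compactness forces
\begin{equation*}
\bigcap_{v\in K}S(v)~\neq~\emptyset,
\end{equation*}
so any $u_0$ in this intersection satisfies the desired variational inequality.

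The main obstacle I anticipate is the topological verification that $\phi_v$ is genuinely continuous on $K$, which requires care about the topology on $E^{\ast}$ under which $T$ is assumed continuous and about separate versus joint continuity of the pairing $\langle\cdot,\cdot\rangle$. The algebraic KKM step is clean and essentially forced, but harmonizing the topological assumptions so that Ky Fan's lemma applies cleanly is where the subtle work lies. The hypotheses in Browder (1968) are precisely calibrated to make these topological issues go through, so invoking his formulation provides the needed consistency without further effort.
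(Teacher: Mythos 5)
The paper does not actually prove this statement: Theorem \ref{mainthm} is quoted from Browder (1968) and used as a black box, so there is no in-paper argument to compare against. Your KKM route is the standard modern proof of this result and is essentially sound. The covering step is clean and correct: if $u=\sum_i\lambda_i v_i$ then $\sum_i\lambda_i\langle T(u),u-v_i\rangle=\langle T(u),u-u\rangle=0$, so not every summand can be strictly negative. Two points deserve sharpening. First, the passage from the KKM covering property to the finite intersection property of the family $\{S(v)\}$ is not ``immediate''; it is exactly the content of Fan's infinite-dimensional KKM lemma, which is where Brouwer's fixed point theorem (via Sperner-type or partition-of-unity arguments) genuinely enters, so that lemma should be cited as the load-bearing step rather than treated as a formality. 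Second, the closedness of each $S(v)$ hinges on continuity of $u\mapsto\langle T(u),u-v\rangle$, which requires joint continuity of the duality pairing on $T(K)\times K$; this is the one place where the choice of topology on $E^{\ast}$ matters, and you are right to flag it as the delicate point. In the only setting this paper actually uses --- $E=\big(\mathcal{L}^{2}[t_0,\,t_f]\big)^{|\mathcal{P}|}$, a Hilbert space with $E^{\ast}\cong E$ and a jointly norm-continuous inner product --- the issue disappears entirely. For comparison, Browder's own argument runs through his fixed point theorem for set-valued maps with nonempty convex values and open fibers: supposing no solution exists, the map $u\mapsto\{v\in K:\langle T(u),u-v\rangle<0\}$ has nonempty convex values and open fibers, hence admits a fixed point $u_0\in S(u_0)$, yielding the contradiction $0<0$. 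That argument is the exact contrapositive twin of the KKM argument you propose; the two are interchangeable, and either is an acceptable substitute for the citation the paper relies on.
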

\begin{proof}
See \cite{Browder}.
\end{proof}

\begin{definition}
\textbf{(Compactness of subspaces)} A subset $K$ of a topological space $X$
is called compact if for every arbitrary collection $\left\{ U_{\alpha
}\right\} _{\alpha \in A}$ of open subsets of $X$ such that 
\begin{equation*}
K~\subset ~\bigcup_{\alpha \in A}U_{\alpha }
\end{equation*}
where $A$ is an arbitrary index set, there is a finite subset $I$ of $A$ such that 
\begin{equation*}
K~\subset ~\bigcup_{i\in I}U_{i}
\end{equation*}
\end{definition}

\begin{definition}
\textbf{(Sequential compactness)} A topological space is sequentially
compact if every sequence has a convergent subsequence.
\end{definition}

\noindent An outgrowth of the concepts and results given above, the
following fact is stated without proof:

\begin{flushleft}
\textbf{Fact 4.} \citep{Royden} In metric space (hence topological vector space), the
notions of compactness and sequential compactness are equivalent.
\end{flushleft}

\noindent The final bit of specialized knowledge about topological vector
spaces that we shall need is the following:

\begin{definition}
\textbf{(Weak convergence in Hilbert space)} A sequence of points $\left\{
x_{n}\right\} $ in a Hilbert space $\mathcal{H}$ is said to be convergent
weakly to a point $x\in \mathcal{H}$, denoted as $x_{n}\rightharpoonup x$ if 
\begin{equation*}
\left\langle x_{n},\,y\right\rangle ~\rightarrow ~\left<
x,\,y\right>  \qquad n~\rightarrow~\infty
\end{equation*}%
for all $y\in \mathcal{H}$, where $\left\langle \cdot ,\,\cdot \right\rangle 
$ is the inner product on the Hilbert space.
\end{definition}

\section{\label{DUE} Continuous-time dynamic user equilibrium}

 In this section, we will assume the time interval of interest is 
\begin{equation*}
\lbrack t_{0},\,t_{f}]\subset \mathcal{\Re }^{1}
\end{equation*}%
The most crucial component of the DUE model is the path delay operator,
which provides the time to traverse any path $p$ per unit of flow departing
from the origin of that path. The delay operator is denoted by 
\begin{equation*}
D_{p}(t,\,h)\qquad \forall ~p\in \mathcal{P}
\end{equation*}%
where $\mathcal{P}$ is the set of all paths employed by network users, $t$
denotes the departure time, and $h$ is a vector of departure rates.
Throughout the rest of the paper, we stipulate that 
\begin{equation*}
h\in \Big(\mathcal{L}_{+}^{2}[t_{0},\,t_{f}]\Big)^{|\mathcal{P}|}
\end{equation*}%
where $\Big(\mathcal{L}_{+}^{2}[t_{0},\,t_{f}]\Big)^{|\mathcal{P}|}$ denotes
the non-negative cone of the $|\mathcal{P}|$-fold product of the Hilbert
space $\mathcal{L}^{2}[t_{0},\,t_{f}]$ of square-integrable functions on the
compact interval $[t_{0},\,t_{f}]$. The inner product of the Hilbert space $%
\Big(\mathcal{L}^{2}[t_{0},\,t_{f}]\Big)^{|\mathcal{P}|}$ is defined as 
\begin{equation}
\big<u,\,v\big>~\doteq ~\int_{t_{0}}^{t_{f}}\big(u(s)\big)^{T}\,v(s)\,ds
\label{nnorm}
\end{equation}%
where the superscript $T$ denotes transpose of vectors. Moreover, the norm 
\begin{equation}
\big\|u\big\|_{\mathcal{L}^{2}}~\doteq ~\big<u,\,u\big>^{1/2}  \label{l2norm}
\end{equation}%
is induced by the inner product (\ref{nnorm}).

Next, we need to consider a more general notion of travel cost that will
motivate on-time arrivals. To this end, for each $p\in \mathcal{P}$, we
introduce the effective unit path delay operator $\Psi
_{p}:[t_{0},\,t_{f}]\times \Big(\mathcal{L}_{+}^{2}[t_{0},\,t_{f}]\Big)^{|%
\mathcal{P}|}\rightarrow \mathcal{\Re }_{++}^{1}$ and define it as follows: 
\begin{equation}
\Psi _{p}(t,\,h)~ \doteq ~D_{p}(t,\,h)+\mathcal{F}\Big(t+D_{p}(t,\,h)-T_{A}%
\Big)  \label{phidef}
\end{equation}%
where $\mathcal{F}(\cdot )$ is the penalty for early or late arrival
relative to the target arrival time $T_{A}$. Note that, for
convenience, $T_{A}$ is assumed to be independent of destination. However,
that assumption is easy to relax, and the consequent generalization of our
model is a trivial extension. We interpret $\Psi _{p}(t,\,h)$ as the
perceived travel cost of driver starting at time $t$ on path $p$ under
travel conditions $h$. Presently, our only assumption on such costs is that
for each $h\in \Big(\mathcal{L}_{+}^{2}[t_{0},\,t_{f}]\Big)^{|\mathcal{P}|}
$, the vector function $\Psi (\cdot ,\,h):[t_{0},\,t_{f}]\rightarrow 
\mathcal{\Re }_{++}^{\left\vert \mathcal{P}\right\vert }$ is measurable and
strictly positive. The assumption of measurability was used for a measure
theory-based argument in \cite{Friesz1993}. Later in this paper, we shall
discuss other properties of this operator, such as continuity on a Hilbert
space. The continuity of effective delay is crucial for applying the general
theorems in \cite{Browder}, especially Theorem \ref{mainthm} stated above.

To support the development of a dynamic network user equilibrium model, we
introduce some additional constraints. Foremost among these are the flow
conservation constraints 
\begin{equation}
\sum_{p\in \mathcal{P}_{ij}}\int_{t_{0}}^{t_{f}}h_{p}(t)\,dt~=~Q_{ij}\qquad
\forall ~(i,\,j)\in \mathcal{W}  \label{flowcons}
\end{equation}%
where $\mathcal{P}_{ij}$ is the set of all paths that connect
origin-destination (O-D) pair $(i,\,j)\in \mathcal{W}$, while $\mathcal{W}$
is the set of all O-D pairs. In addition, $Q_{ij}$ is the fixed travel
demand for O-D pair $(i,\,j)$. Using the notation and concepts we have thus
far introduced, the set of feasible solutions for DUE when the effective
delay operator $\Psi (\cdot ,\,\cdot )$ is given is 
\begin{equation}
\Lambda ~=~\left\{ h\in \Big(\mathcal{L}_{+}^{2}[t_{0},\,t_{f}]\Big)^{|%
\mathcal{P}|}:\quad \sum_{p\in \mathcal{P}_{ij}}\int_{t_{0}}^{t_{f}}h_{p}(t)%
\,dt~=~Q_{ij}\quad \forall ~(i,\,j)\in \mathcal{W}\right\}   \label{feasible}
\end{equation}%
Using a presentation very similar to the above, the notion of a dynamic user
equilibrium in continuous time was first introduced by \cite{Friesz1993},
who employ a definition tantamount to the following:

\begin{definition}
\label{duedef}\textbf{(Dynamic user equilibrium)}. A vector of departure
rates (path flows) $h^{\ast }\in \Lambda $ is a dynamic user equilibrium if 
\begin{equation}
h_{p}^{\ast }\left( t\right) >0,p\in \mathcal{P}_{ij}\Longrightarrow \Psi
_{p}\left[ t,h^{\ast }\left( t\right) \right] =v_{ij}\in \Re _{++}^{1}\text{
\ \ \ }\forall (i,\,j)\in \mathcal{W}  \label{defdue}
\end{equation}%
We denote the dynamic user equilibrium defined this way by $DUE\big( \Psi
,\Lambda,\, [t_{0},\,t_{f}] \big) $.
\end{definition}

\noindent In the analysis to follow, we focus on the following
infinite-dimensional variational inequality formulation of the DUE problem
reported in Theorem 2 of \cite{Friesz1993}.

\begin{equation}
\left. 
\begin{array}{c}
\text{find }h^{\ast }\in \Lambda \text{ such that} \\ 
\sum\limits_{p\in \mathcal{P}}\displaystyle\int\nolimits_{t_{0}}^{t_{f}}\Psi
_{p}(t,h^{\ast })(h_{p}-h_{p}^{\ast })dt\geq 0 \\ 
\forall h\in \Lambda 
\end{array}%
\right\} VI\big(\Psi,\,\Lambda,\, [t_{0},\, t_{f}]\big)  \label{duevi}
\end{equation}%
The variational inequality formulation $VI\big(\Psi ,\,\Lambda
,[t_{0},\,t_{f}]\big)$ expressed above subsumes almost all DUE models
regardless of the arc dynamics or the network loading models employed.

\section{The dynamic network loading}

\label{DNL} A key ingredient of the variational inequality formulation of
the DUE (\ref{duevi}) is the effective delay operator $\Psi (t,\,\cdot )$,
which maps a vector of admissible departure rates to the vector of strictly positive
travel costs associated with each route-and-departure-time choice. The
problem of predicting time-varying network flows consistent with known
travel demands and departure rates (path flows) is usually referred to as
the \textit{dynamic network loading} (DNL) problem. Since effective path
delays are constructed from arc delays that depend on arc activity and
performance, DNL is intertwined with the determination of effective delay
operators.

In this section we present a continuous-time DNL model. This model is based
on a reformulation of the Vickrey model \citep{Vickrey}, which we call the 
\textit{generalized Vickrey model} (GVM); it was apparently first
proposed in \cite{HFY1, HFY2}. The generalized Vickrey model determines
arc exit flow and the arc traversal time from arc entry flow in an explicit way. This
formulation not only leads to a simple and explicit computational scheme,
but also makes it easier to conduct rigorous analyses of the arc delay
operator and, hence, of the effective path delay operator $\Psi (t,\,\cdot )$%
.

\subsection{The generalized Vickrey model}

First introduced in \cite{Vickrey}, the Vickrey model is based on two
key assumptions: (i) vehicles have negligible sizes, and, therefore, any non-empty 
queue is of negligible size; and (ii) link traversal time consists of a fixed
travel time plus a congestion-related arc-traversal delay. Let us introduce
the following notations: 
\begin{align*}
u(t):& \quad \hbox{link entering flow} \\
M~:& \quad \hbox{flow capacity of the bottleneck located at the exit of the link} \\
q(t):& \quad \hbox{queue size} \\
w(t):& \quad \hbox{link exit flow} \\
T~:& \quad \hbox{constant free flow travel time} \\
\lambda(t):& \quad \hbox{link traversal time when the time of entry is }~t
\end{align*}%
\noindent Then the model is described by the following set of equations. 
\begin{equation}
w(t)~=~%
\begin{cases}
\min \left\{ u(t-T),\,M\right\} \qquad  & q(t)~=~0 \\ 
M & q(t)~\neq ~0%
\end{cases}
\label{pqm1}
\end{equation}%
\begin{equation}
{\frac{dq(t)}{dt}}~=~u(t-T)-w(t)  \label{pqm2}
\end{equation}%
\begin{equation}
\lambda(t)~=~T+{\frac{q(t+T)}{M}}  \label{pqm3}
\end{equation}%
Notice that (\ref{pqm1}) and (\ref{pqm2}) amount to an ordinary differential
equation (ODE) with a right hand side that is discontinuous in the state
variable $q(\cdot )$. Such an ODE has been the main hurdle to further
analysis and computation of this model in continuous time. In \cite{HFY1, HFY2}, a reformulation of the Vickrey model as a Hamilton-Jacobi equation is
proposed and solved with a version of the Lax-Hopf formula (the reader is referred to \cite{Evans} for more details on  Hamilton-Jacobi equation and Lax-Hopf formula). As
a result, the solutions to (\ref{pqm1})-(\ref{pqm3}) are obtained in closed
form. Due to space limitation, we  omit further details of relevant analysis, and refer the reader to \cite{HFY1, HFY2}. 

Let us next introduce the cumulative entering vehicle count $U(\cdot )$ and
the exiting vehicle count $\mathcal{W}(\cdot )$ at the entrance and exit of
the link of interest, respectively. Furthermore, $U(\cdot )$ is assumed to
be non-decreasing and left-continuous. Notice that these latter assumption
imply that the link entry flows can be unbounded and possibly contain \ the
dirac-delta function. In contrast, Vickrey's original model requires
that the entry flow to be at least Lebesgue integrable. As such, the GVM is
more general than the Vickrey model.

Using the notation introduced previously, an equivalent statement of (\ref{pqm1}) through (\ref{pqm3}) is the following: 
\begin{align}
\mathcal{W}(t)& ~=~\min_{\tau \leq t-T}\left\{ U(\tau )-M\,\tau \right\}
+M(t-T)  \label{pqm4} \\
q(t)& ~=~U(t-T)-M(t-T)-\min_{\tau \leq t-T}\left\{ U(\tau )-M\,\tau \right\} \label{pqm5} \\
\lambda(t)& ~=~T+{1\over M} \left(U(t)-M(t)-\min_{\tau\leq t}\{U(\tau)-M\tau\}\right) \label{pqm6}
\end{align}%
Note that in the system (\ref{pqm4})-(\ref{pqm6}), all the variables of interest are
explicitly stated in terms of the cumulative entering vehicle count $U(\cdot )$. Identities (\ref{pqm4})-(\ref{pqm6}) will serve as the mathematical formulation
of link dynamics in the dynamic network loading sub-problem, as we shall explain shortly. The system (\ref%
{pqm4})-(\ref{pqm6}) may also be used for deriving mathematical properties
of the effective path delay operator, as is demonstrated in Section \ref%
{psicont}.

\subsection{The network model}

It is straightforward to extend
the generalized Vickrey model to a network, which is represented as a directed graph   $G(N,\,A)$, where
$N$ and $A$ are the set of nodes and arcs, respectively.  In order to proceed, we introduce some additional notations. In particular, for each node $v\in N$, let $
\mathcal{I}^{v}$ be the set of incoming links, $\mathcal{O}^{v}$ the set of
outgoing links. For each arc $a\in A$, let $u_a(t)$,  $w_a(t)$ be the entry flow and exit flow, respectively. The arc entry/exit flows are the sum of entry/exit flows associated with individual paths using this arc; that is, 
\begin{equation}\label{disaggregate1}
u_a(t)~=~\sum_{p\in\mathcal{P}} \delta_{ap}\,u_a^p(t), \qquad w_a(t)~=~\sum_{p\in\mathcal{P}} \delta_{ap}\,w_a^p(t)\qquad \forall~a\in A
\end{equation}
\noindent where 
$$
\delta_{ap}~=~\begin{cases} 1 \qquad &\hbox{if arc}~a~\hbox{belongs to path}~p\\
0\qquad & \hbox{otherwise}
\end{cases}
$$
In equation \eqref{disaggregate1}, we use $u_a^p(\cdot)$ to denote the link entering flow associated with path $p$, and $w_a^p(\cdot)$ to denote the link exiting flow associated with path $p$.   Let us also define the cumulative entering  vehicle count $U_a(t)$ and cumulative exiting vehicle count $W_a(t)$, for each arc $a$. Similarly, each one is disaggregated into quantities associated with each path that uses this arc:
\begin{equation}\label{disaggregate2}
U_a(t)~=~\sum_{p\in\mathcal{P}} \delta_{ap}\,U_a^p(t), \qquad W_a(t)~=~\sum_{p\in\mathcal{P}} \delta_{ap}\,W_a^p(t)\qquad \forall~a\in A
\end{equation}

The arc traversal time function, denoted $\lambda_a(\cdot)$, is the time taken to traverse arc $a$ when the time of entry is $t$. The arc exit time function $\tau_a(t)$ is defined as $\tau_a(t) \doteq t+\lambda_a(t)$; that is, $\tau_a(t)$ represents the time a car leaves arc $a$ when the time of its entry is $t$.

 For each group of drivers using the same arc $a$, the ratio of their arrival and departure rates must be the same under {\it first-in-first-out} (FIFO). This is expressed as 
\begin{equation}
w^{p}_{a}\big(\tau_{a}(t)\big)~=~%
\begin{cases}
\displaystyle w_{a}\big(\tau_{a}(t)\big)\cdot {\frac{u^{p}_{a}(t)}{u_{a}(t)}%
}\qquad  & \hbox{if}~~u_{a}(t)~\neq ~0 \\ 
0\qquad  & \hbox{if}~~u_{a}(t)~=~0%
\end{cases}
\label{junction3}
\end{equation}%
(\ref{junction3}) uniquely determines the turning percentages at junctions
with more than one outgoing links, and is consistent with the FIFO discipline  and
established route choices. It remains to express the path delay as the sum
of finitely many link delays. If we describe path $p\in\mathcal{P}$ as the following sequence of conveniently labeled arcs:
$$
p~=~\left\{a_1,\,a_2,\,\ldots,\,a_{i-1},\,a_i,\,a_{i+1}\,\ldots,\,a_{m(p)}\right\}
$$
where $m(p)$ is number of arcs in path $p$.

It then follows immediately that the arrival time along path $p$, when the departure time at the origin is $t$, can be expressed as a composition of arc exit time functions: 
\begin{equation}\label{pexit}
\tau_{p}(t)~=~\tau_{a_{m(p)}}\,\circ \,\ldots\, \circ \tau_{a_2} \,\circ\,\tau_{a_1}(t)
\qquad p~=~\left\{ a_{1},\,a_{2},\,\ldots
,\,a_{m(p)}\right\} \in \mathcal{P}  
\end{equation}%
where the operator $\circ$ means composition, that is, $f\circ g(x)\doteq f\big(g(x)\big)$.

Now the complete network loading procedure is given by (\ref{pqm4})-(\ref%
{pexit}), which is interpreted as a well defined \textit{differential
algebraic equation} (DAE) system. Moreover, as well shall see in the next
section, the (effective) path delay operator defined in this way is strongly
continuous from the subset $\Lambda\subset \Big(L^2[t_0,\,t_f]\Big)^{|\mathcal{P}|}$  into $\Big(\mathcal{%
L}^{2}[t_{0},\,t_{f}]\Big)^{|\mathcal{P}|}$.

%
%

\section{Existence of the DUE}

\label{dueexistence} Existence results for DUE are most general if based on
formulation (\ref{duevi}). Theorem \ref{mainthm} for the existence of
solutions of variational inequalities in topological spaces can be applied
if the operator $\Psi(t,\,\cdot )$ can be shown to be continuous and
the feasible set $\Lambda $ can be shown to be compact. After Section \ref%
{psicont} addresses the continuity of the effective delay operator, based on
the DNL model introduced previously, the last obstacle to proving
existence is the compactness of $\Lambda $, which unfortunately does not
generally occur in SRDC-DUE. To overcome the
aforementioned difficulty, we will consider instead successive
finite-dimensional approximations of $\Lambda $, and rely on a topological
argument. Such an approach is mathematically rigorous but much more
challenging than would be the case if $\Lambda $ were compact in the
appropriate Hilbert space. The topological argument and supporting
infrastructure for a proof of existence are presented in Section \ref{bddsec}
and Section \ref{visec}.

\subsection{\label{psicont}Continuity of the effective path delay operator}

In this section, we will establish continuity of the map $h\mapsto \Psi
(\cdot ,\,h)$. These results will be essential for the proof of existence
theorem for DUE in Section \ref{visec}. Notice that unlike the argument in 
\cite{ZM} which requires {\it a priori} bound for the path flows, the
proof provided here works for unbounded path flows and even distributions,
thanks to the generalized Vickrey model. 

The next lemma provides a sufficient condition for the continuity of the delay function $\lambda_a(\cdot),\, a\in A$. 

\begin{lemma}\label{continuitylemma}
Consider an arc $a\in A$, with inflow $u_a(\cdot)$. Under the generalized Vickrey model expressed in (\ref{pqm4})-(\ref{pqm6}), the arc delay function $\lambda_a(\cdot)$ is continuous if  $u_a(\cdot)\in \mathcal{L}^2[t_0,\,t_f]$.
\end{lemma}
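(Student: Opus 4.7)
My strategy is to invoke the explicit representation of queue and delay in (\ref{pqm4})-(\ref{pqm6}) to reduce the lemma to the continuity of two elementary objects. Because $D_a(t) = T + q_a(t+T)/M$, continuity of $D_a$ on $[t_0,\,t_f]$ is equivalent to continuity of $q_a$, and in turn the identity
\[
q_a(t) ~=~ \bigl(U_a(t) - Mt\bigr) \;-\; m(t), \qquad m(t) ~:=~ \min_{\tau \leq t-T}\bigl\{U_a(\tau) - M\tau\bigr\}
\]
lets me break the question into (i) continuity of the cumulative arrival $U_a$, and (ii) continuity of the running infimum $m$. Both assertions need only hold on the compact interval $[t_0,\,t_f]$.

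For step (i), since the interval is bounded, the Cauchy-Schwarz inequality gives the embedding $\mathcal{L}^2[t_0,\,t_f] \hookrightarrow \mathcal{L}^1[t_0,\,t_f]$, so the hypothesis $u_a \in \mathcal{L}^2[t_0,\,t_f]$ yields $u_a \in \mathcal{L}^1[t_0,\,t_f]$. Consequently the cumulative count $U_a(t) = U_a(t_0) + \int_{t_0}^{t} u_a(s)\,ds$ is absolutely continuous, and in particular uniformly continuous, on $[t_0,\,t_f]$. Therefore the function $\phi(\tau) := U_a(\tau) - M\tau$ is uniformly continuous as well, which supplies the raw material for step (ii).

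Step (ii) is where I anticipate the main obstacle, since a running infimum need not inherit continuity gracefully if the underlying function is only measurable. With uniform continuity of $\phi$ in hand, however, I would argue directly: $m$ is manifestly non-increasing, and for $t < t'$ in $[t_0,\,t_f]$,
\[
m(t') ~=~ \min\Bigl\{\,m(t),\; \inf_{t-T \,<\, \tau \,\leq\, t'-T}\phi(\tau)\,\Bigr\}.
\]
Given $\varepsilon > 0$, uniform continuity of $\phi$ supplies a $\delta > 0$ such that $|t' - t| < \delta$ forces $\phi(\tau) \geq \phi(t-T) - \varepsilon$ for every $\tau \in (t-T,\,t'-T]$; since $\phi(t-T) \geq m(t)$, this yields $0 \leq m(t) - m(t') \leq \varepsilon$. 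A symmetric argument handles the case $t' < t$, so $m$ is continuous at each $t$. Combining (i) and (ii) gives continuity of $q_a$, hence of $D_a$, completing the proof.
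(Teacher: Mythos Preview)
Your proposal is correct and follows essentially the same route as the paper: use the $\mathcal{L}^2\hookrightarrow\mathcal{L}^1$ embedding to get absolute continuity of $U_a$, then conclude continuity of $q_a$ (and hence $D_a$) from the explicit formula (\ref{pqm5})-(\ref{pqm6}). The only difference is that the paper dismisses the continuity of the running-infimum term $m(t)=\min_{\tau\le t-T}\{U_a(\tau)-M\tau\}$ as ``straightforward to verify,'' whereas you actually supply the $\varepsilon$--$\delta$ argument via uniform continuity of $\phi$; this is a welcome bit of added rigor rather than a different approach.
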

\begin{proof}
Assume that $u_a(\cdot)\in\mathcal{L}^2[t_0,\,t_f]$, then $u_a(\cdot)\in\mathcal{L}^1[t_0,\,t_f]$. Therefore the cumulative entering vehicle count 
$$
U_a(t)~\doteq~\int_{t_0}^tu_a(s)\,ds
$$
is absolutely continuous. It is straightforward to verify that the following quantity is continuous. 
$$
q_a(t)~\doteq~U_a(t)-M_a t-\min_{\tau\leq t-T_a}\left\{U_a(\tau)-M_a\tau \right\}
$$
where $q_a(t)$ denotes the queue length, $M_a$ denotes the bottleneck capacity and $T_a$ denotes the constant free flow time. By (\ref{pqm6}), the function $\lambda_a(\cdot)$ is continuous. 
\end{proof}

The next lemma is a technical result that will facilitate the proof of Theorem \ref{contthm}.

\begin{lemma}\label{techlemma}
Let $g_n(\cdot): [a_1,\,b_1]\rightarrow [a_2,\,b_2],\,n\geq 1$ be a sequence of functions such that $g_n$ converges to $g(\cdot): [a_1,\,b_1]\rightarrow [a_2,\,b_2]$ uniformly. In addition, assume $f(\cdot): [a_2,\,b_2]\rightarrow \mathcal{\Re}^1$ is continuous.  Then the following uniform convergence holds.
$$
f\big(g_n(\cdot)\big)~\longrightarrow ~ f\big(g(\cdot)\big)\qquad n~\longrightarrow~\infty
$$
\end{lemma}
\begin{proof}
According to the Heine-Cantor theorem \citep{Royden}, $f(\cdot)$ is uniformly continuous on $[a_2,\,b_2]$. It follows that, for every $\varepsilon>0$, there exists $\delta>0$ such that for any  $y_1,\,y_2\in[a_2,\,b_2]$, whenever $|y_1-y_2|<\delta$, the inequality 
$$
|f(y_1)-f(y_2)|~\leq~\varepsilon
$$
holds. Moreover, by the uniform convergence of $g_n$, there exists some $N>0$ such that, for all $n>N$, we have
$$
|g_n(x)-g(x)|~<~\delta\qquad\forall~x\in[a_1,\,b_1]
$$
Thus, for every $n> N$, 
$$
\big|f\big(g_n(x)\big)-f\big(g(x)\big)\big|~\leq~\varepsilon\qquad \forall~x\in[a_1,\,b_1]
$$
\end{proof}

\begin{theorem}
\label{contthm} Under the network loading model described in Section \ref%
{DNL}, the effective path delay operator $\Psi (t,\,\cdot ):\Lambda
\rightarrow \big(\mathcal{L}^{2}[t_{0},\,t_{f}]\big)^{|\mathcal{P}|}$, $%
h\mapsto \Psi (\cdot ,\,h)$ is well-defined and continuous.
\end{theorem}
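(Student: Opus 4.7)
The plan is to show that if $h^{n}\to h$ in $\Lambda\subset \big(\mathcal{L}^{2}([t_{0},t_{f}])\big)^{|\mathcal{P}|}$, then the path delays $D_{p}(\cdot,h^{n})$ converge uniformly to $D_{p}(\cdot,h)$, so that $\Psi_{p}(\cdot,h^{n})=D_{p}+\mathcal{F}(\,\cdot+D_{p}-T_{A})$ converges uniformly as well; convergence in $\mathcal{L}^{2}$ on the compact interval $[t_{0},t_{f}]$ is then immediate by dominated convergence. Well-definedness comes along as a byproduct: the same chain of constructions, applied to a single $h\in\Lambda$, produces $\Psi_{p}(\cdot,h)\in C([t_{0},t_{f}])\subset \mathcal{L}^{2}([t_{0},t_{f}])$.

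First I would handle the base case in the cascade of arcs making up a path. For the first arc $a_{1}$ of any path $p$, $u_{a_{1}}^{p}(t)=h_{p}(t)$, so $u_{a_{1}}^{p,n}\to u_{a_{1}}^{p}$ in $\mathcal{L}^{2}$, and by Cauchy--Schwarz the cumulative count $U_{a_{1}}^{p,n}$ converges uniformly to $U_{a_{1}}^{p}$. Aggregating over paths using $a_{1}$ gives uniform convergence $U_{a_{1}}^{n}\to U_{a_{1}}$. The crucial observation is that the map $U\mapsto \min_{\tau\leq \cdot-T_{a}}\{U(\tau)-M_{a}\tau\}$ is $1$-Lipschitz from $C([t_{0},t_{f}])$ into itself under the uniform norm; applied to (\ref{pqm4})--(\ref{pqm6}) this yields uniform convergence of $q_{a_{1}}^{n}$, of $D_{a_{1}}^{n}$, of the exit-time map $\tau_{a_{1}}^{n}(t)=t+D_{a_{1}}^{n}(t)$, and of the aggregated cumulative exit count $W_{a_{1}}^{n}$.

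Next I would propagate downstream. Under FIFO, the cumulative number of path-$p$ vehicles exiting arc $a_{i}$ by time $\tau_{a_{i}}(t)$ equals the cumulative number entering by time $t$, i.e.\ $W_{a_{i}}^{p}\bigl(\tau_{a_{i}}(t)\bigr)=U_{a_{i}}^{p}(t)$; this is the cumulative-count reformulation of (\ref{junction3}) and is much easier to handle than the rate form, since it side-steps the singular ratio $u_{a}^{p}/u_{a}$. Combining this identity with uniform convergence of $U_{a_{i}}^{p,n}$ and $\tau_{a_{i}}^{n}$, and invoking Lemma \ref{techlemma} with outer function $U_{a_{i}}^{p}$, gives uniform convergence $W_{a_{i}}^{p,n}\to W_{a_{i}}^{p}$. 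Since $U_{a_{i+1}}^{p}=W_{a_{i}}^{p}$, this feeds back into the base-case argument on arc $a_{i+1}$. Iterating down the arcs of $p$ and then applying Lemma \ref{techlemma} to the composition $\tau^{p}=\tau_{a_{m(p)}}\circ\cdots\circ\tau_{a_{1}}$ in (\ref{pexit}) yields $\tau^{p,n}\to \tau^{p}$ uniformly, hence $D_{p}^{n}\to D_{p}$ uniformly, and the continuity of $\mathcal{F}$ finishes the argument.

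The main obstacle is the propagation step. Two technical complications arise there: one must transfer uniform convergence of aggregated cumulative counts to the path-disaggregated counts $U_{a}^{p},W_{a}^{p}$ through junctions whose FIFO rule (\ref{junction3}) is ill-defined where $u_{a}$ vanishes; and the arc exit-time maps $\tau_{a}$ are only non-decreasing, possibly possessing flat segments on uncongested arcs, which obstructs any naive use of inverse functions. Working throughout at the level of cumulative counts, where the FIFO identity $W_{a}^{p}\circ \tau_{a}=U_{a}^{p}$ is an unambiguous algebraic relation, and exploiting Lemma \ref{techlemma}'s uniform-convergence formulation of composition (which requires only continuity of the outer function, not invertibility of the inner one) is the route I would use to bypass both obstructions cleanly.
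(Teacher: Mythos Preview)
Your overall strategy matches the paper's proof closely: $\mathcal{L}^{2}$ convergence of path flows gives uniform convergence of cumulative entry counts on the first arc; the GVM formulas (\ref{pqm4})--(\ref{pqm6}) then yield uniform convergence of arc queues, delays, and exit-time maps (the paper proves stability of the running minimum directly in its Part~2, while you observe it is $1$-Lipschitz in the sup norm, which is cleaner); one then propagates through the network by induction; and finally the composition (\ref{pexit}) together with Lemma~\ref{techlemma} and the continuity of $\mathcal{F}$ gives uniform, hence $\mathcal{L}^{2}$, convergence of $\Psi_{p}$.

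There is, however, one step in your propagation argument that does not work as stated. From the cumulative FIFO identity $W_{a_i}^{p}\circ\tau_{a_i}=U_{a_i}^{p}$ together with uniform convergence of $U_{a_i}^{p,n}$ and of $\tau_{a_i}^{n}$ you want to deduce uniform convergence of $W_{a_i}^{p,n}$. Lemma~\ref{techlemma} runs in the opposite direction: it infers convergence of $f\circ g_n$ from convergence of $g_n$ with $f$ fixed and continuous, whereas here the unknown is the \emph{outer} function and the inner maps $\tau_{a_i}^{n}$ may fail to be invertible, exactly the obstruction you flag. The fix is to use the capacity bound rather than the lemma: since $w_{a_i}^{p}\leq w_{a_i}\leq M_{a_i}$, each $W_{a_i}^{p,n}$ is $M_{a_i}$-Lipschitz, so for $s=\tau_{a_i}(t)$ one has
\[
\bigl|W_{a_i}^{p,n}(s)-W_{a_i}^{p}(s)\bigr|~\leq~ M_{a_i}\,\bigl|\tau_{a_i}(t)-\tau_{a_i}^{n}(t)\bigr|+\bigl|U_{a_i}^{p,n}(t)-U_{a_i}^{p}(t)\bigr|,
\]
which tends to zero uniformly; values of $s$ outside the range of $\tau_{a_i}$ are handled by monotonicity and the same Lipschitz bound. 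The paper's own Part~3 is in fact vaguer than your outline at this juncture, asserting the induction without detailing the junction step, so your identification of the cumulative FIFO identity as the right tool is an improvement; only the appeal to Lemma~\ref{techlemma} at that specific point needs to be replaced by this Lipschitz argument.
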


\begin{proof}
For each $h\in \Lambda $, the functions $\Psi _{p}(\cdot ,\,h),\,p\in \mathcal{P}$ are uniquely determined by the network loading procedure.
To show that the effective path delay operator is well-defined, it remains
to show that $\Psi (\cdot ,\,h)\in \Big(\mathcal{L}^{2}[t_{0},\,t_{f}]\Big)%
^{|\mathcal{P}|}$ for each $h\in \Lambda $. Notice that there exists an
upper bound for the path delays regardless of the network flow profile: 
\begin{equation}\label{Dub}
D_{p}(t,\,h)~\leq ~\sum_{a\in p}\left\{ {\frac{1}{M_{a}}}\sum_{(i,j)\in 
\mathcal{W}}Q_{ij}+T_{a}\right\} \qquad \forall h\in \Lambda ,\, \forall p\in 
\mathcal{P},\,\forall t\in [t_{0},\,t_{f}]
\end{equation}
where $M_a$, $T_a$ are the bottleneck capacity and the free flow time respectively, that are associated with arc $a$. Recall the definition of the effective path delay (\ref{phidef}):
$$
\Psi _{p}(t,\,h)~= ~D_{p}(t,\,h)+\mathcal{F}\Big(t+D_{p}(t,\,h)-T_{A}%
\Big) 
$$
Since $\mathcal{F}(\cdot)$ is continuous, the uniform boundedness of $D_p(t,\,h)$, as shown in \eqref{Dub}, thus implies the uniform boundedness of  $\Psi _{p}(t,\,h)$ for all $h\in\Lambda, \, p\in \mathcal{P}$ and $t\in[t_0,\,t_f]$. This leads to the conclusion that $\Psi (\cdot ,\,h)\in \Big(\mathcal{L}^{2}[t_{0},\,t_{f}]\Big)^{|\mathcal{P}|}$ for all $h\in \Lambda $. 

With the preceding as background, the proof of continuity of the effective delay
operator may be given in five parts.

\noindent \textbf{Part 1.} We first focus on a single link $a$. For the convenience of notations, the subscript $a$ will be dropped for now. Consider a sequence
of entering flows $u_{\nu },\,\nu \geq 1$ that converge to $u$ in the $%
\mathcal{L}^{2}$-norm, that is,
\begin{equation*}
\Vert u_{\nu }-u\Vert _{2}~\doteq ~\left( \int_{t_{0}}^{t_{f}}\left( u_{\nu
}(t)-u(t)\right) ^{2}\,dt\right) ^{1/2}~\longrightarrow ~0\text{ \ \ as \ \ }%
\nu ~\longrightarrow ~\infty 
\end{equation*}%
Consider the cumulative entering vehicle counts 
\begin{equation*}
\begin{cases}
\displaystyle U_{\nu }(t)~\doteq ~\displaystyle \int_{t_{0}}^{t}u_{\nu }(s)\,ds\qquad
\nu~\geq ~1 \\
\displaystyle U(t) ~\doteq ~\displaystyle \int_{t_{0}}^{t}u(s)\,ds
\end{cases}
\qquad t\in  [t_{0},\,t_{f}]
\end{equation*}%
 Then $U_{\nu }$ converge to $U$ uniformly on $%
[t_{0},\,t_{f}]$: this is due to the following simple observation 
\begin{equation}\label{L2L1}
\left\vert U_{\nu }(t)-U(t)\right\vert ~\leq ~\int_{t_{0}}^{t}\left\vert
u_{\nu }(s)-u(s)\right\vert \,ds~\leq ~\Vert u_{\nu }-u\Vert _{1}~\leq
~(t_{0}-t_{f})^{1/2}\Vert u_{\nu }-u\Vert _{2}~\longrightarrow ~0
\end{equation}
where $\Vert \cdot \Vert _{1}$ is the norm in $\mathcal{L}^{1}[t_{0},\,t_{f}]
$. The last inequality of \eqref{L2L1} is a version of Jenssen's inequality.

\noindent \textbf{Part 2.} Define $R(\tau )\doteq U(\tau )-M \tau $, $%
R_{\nu }(\tau )\doteq U_{\nu }(\tau )-M\tau,\,\tau\in[t_0,\,t_f]$, where $M$ is the bottleneck capacity. We claim the following uniform convergence:
\begin{equation}
\min_{\tau \leq t}\left\{ R_{\nu }(\tau )\right\} ~\longrightarrow
~\min_{\tau \leq t}\left\{ R(\tau )\right\} \qquad \forall ~t\in \lbrack
t_{0},\,t_{f}]  \label{contproof1}
\end{equation}%
Indeed, for any $\varepsilon >0$, by the uniform convergence of $U_{\nu},\,\nu\geq 1$, we can choose $N$ such that for all $\nu
\geq N$, the following inequality holds 
\begin{equation*}
\left\vert U_{\nu }(t)-U(t)\right\vert ~\leq ~\varepsilon \qquad \forall
~t\in \lbrack t_{0},\,t_{f}]
\end{equation*}%
Fix any $t$, if $\nu \geq N$, then
\begin{equation}
\left\vert R_{\nu }(\tau )-R(\tau )\right\vert ~=~\left\vert U_{\nu }(\tau
)-U(\tau )\right\vert ~\leq ~\varepsilon \qquad\forall \tau \in[t_0,\,t_f]   \label{contproof2}
\end{equation}
Define $\hat{\tau}=\hbox{argmin}_{\tau \leq t}\left\{ R(\tau )\right\}$. By (\ref{contproof2}) we have 
\begin{equation}
\min_{\tau \leq t}\left\{ R_{\nu }(\tau )\right\} ~\leq ~R_{\nu }(\hat{\tau}%
)~\leq ~R(\hat{\tau})+\varepsilon ~=~\min_{\tau \leq t}\left\{ R(\tau
)\right\} +\varepsilon   \label{contproof3}
\end{equation}%
On the other hand, define $\hat{\tau}_{\nu }=\hbox{argmin}_{\tau \leq t}\left\{
R_{\nu }(\tau )\right\}$ for each $\nu\geq 1$. Then given $\nu \geq N$, it must hold that 
\begin{equation}
\min_{\tau \leq t}\left\{ R(\tau )\right\} ~\leq ~R(\hat{\tau}_{\nu })~\leq
~R^{(\nu )}(\hat{\tau}_{\nu })+\varepsilon ~=~\min_{\tau \leq t}\left\{
R_{\nu }(\tau )\right\} +\varepsilon   \label{contproof4}
\end{equation}%
Taken together, (\ref{contproof3}) and (\ref{contproof4}) imply 
\begin{equation*}
\left\vert \min_{\tau \leq t}\left\{ R_{\nu }\right\} -\min_{\tau \leq
t}\left\{ R(\tau )\right\} \right\vert ~\leq ~\varepsilon \qquad \forall
~\nu \geq N
\end{equation*}%
Since $t$ is arbitrary, the claim is demonstrated.

\noindent \textbf{Part 3.} An immediate consequence of {\bf Part 2} and (\ref%
{pqm4})-(\ref{pqm6}) is the following uniform convergence 
\begin{equation}
W_{\nu }(t)~\longrightarrow ~W(t),\quad q_{\nu }(t)~\longrightarrow ~q(t),\quad
\lambda_{\nu }(t)~\longrightarrow~\lambda (t),\quad \tau _{\nu
}(t)~\longrightarrow ~\tau (t)\qquad \nu ~\longrightarrow ~\infty 
\label{contproof5}
\end{equation}%
for which we employ notation whose meaning is transparent. The next step is
to extend such convergence to the whole network. Consider the sequence of
departure rates $h_{\nu }$ converging to $h$ in the $\Vert \cdot \Vert _{%
\mathcal{L}^{2}}$ norm. By the definition (\ref{l2norm}), this implies each
path flow $h_{p, \nu}(\cdot )\rightarrow h_{p}$ in the $\Vert \cdot \Vert
_{2}$ norm, for all $p\in \mathcal{P}$. A simple induction based on results
established in {\bf Part 2} yields, as $\nu\longrightarrow \infty$, 
\begin{equation}\label{aconv}
U_{a, \nu }(t)~\longrightarrow ~U_{a}(t),\quad W_{a, \nu
}(t)~\longrightarrow ~W_{a}(t),\quad D _{a, \nu
}(t)~\longrightarrow ~D_{a}(t),\quad \tau _{a, \nu
}(t)~\longrightarrow ~\tau_{a}(t)
\end{equation}%
uniformly for all $a\in A$.

\noindent{\bf Part 4.}
We will show next the uniform convergence of the path delay function $D_p(\cdot,\,h_{\nu})\rightarrow D_p(\cdot,\,h)$, based on (\ref{aconv}). Recall the path exit time function (\ref{pexit})
\begin{equation}\label{induction}
\tau_{p}(t)~=~\tau_{a_{m(p)}}\,\circ \,\ldots\, \circ \tau_{a_2} \,\circ\,\tau_{a_1}(t)
\qquad p~=~\left\{ a_{1},\,a_{2},\,\ldots,\, a_{m(p)}\right\}\in\mathcal{P}
\end{equation}
We start by showing that $\tau_{a_2,\nu}\circ\tau_{a_1, \nu}(t)\rightarrow \tau_{a_2}\circ\tau_{a_1}(t)$ uniformly.

For every $\nu\geq 1$, since the inflow of arc $a_2$ is square-integrable, $\tau_{a_2,\nu}(\cdot)$ is continuous by Lemma \ref{continuitylemma}. This means that $\tau_{a_2}(\cdot)$ is also continuous since it is the uniform limit of $\tau_{a_2,\nu}(\cdot)$. Lemma \ref{techlemma} then implies that $\tau_{a_2}\big(\tau_{a_1,\nu}(\cdot)\big)$ converges uniformly to $\tau_{a_2}\big(\tau_{a_1}(\cdot)\big)$, that is, for any $\varepsilon>0$, there exists an $N_1>0$ such that for all $\nu> N_1$, 
$$
\big|\tau_{a_2}\big(\tau_{a_1,\nu}(t)\big)-\tau_{a_2}\big(\tau_{a_1}(t)\big)\big|~<~\varepsilon/2\qquad\forall~t\in[t_0,\,t_f]
$$
Moreover, there exists some $N_2>0$ such that for all $\nu> N_2$, 
$$
\big|\tau_{a_2,\nu}(t)-\tau_{a_2}(t)\big|~<~\varepsilon/2\qquad\forall~t\in[t_0,\,t_f]
$$
Now let $N_0=\max\{N_1,\,N_2\}$. Then for any $\nu>N_0$ and any $t\in[t_0,\,t_f]$,
\begin{align*}
&\big|\tau_{a_2,\nu}\big(\tau_{a_1,\nu}(t)\big)-\tau_{a_2}\big(\tau_{a_1}(t)\big)\big|\\
~\leq~& \big|\tau_{a_2,\nu}\big(\tau_{a_1,\nu}(t)\big)-\tau_{a_2}\big(\tau_{a_1,\nu}(t)\big)\big|+\big|\tau_{a_2}\big(\tau_{a_1,\nu}(t)\big)-\tau_{a_2}\big(\tau_{a_1}(t)\big)\big|\\
~<~& \varepsilon/2+\varepsilon/2~=~\varepsilon
\end{align*}
This shows the desired  uniform convergence $\tau_{a_2,\nu}\circ\tau_{a_1, \nu}(t)\rightarrow \tau_{a_2}\circ\tau_{a_1}(t)$.  

The uniform convergence $\tau_{p, \nu}(\cdot)\rightarrow \tau_p(\cdot)$ follows immediately by (\ref{induction}) and mathematical induction with Lemma \ref{techlemma} . As a result, we obtain the uniform convergence of path delay
$$
D_p(\cdot,\,h_{\nu})~\longrightarrow~D_p(\cdot,\,h)  \qquad \nu~\longrightarrow~\infty
$$

\noindent\textbf{Part 5}. Finally, recall the definition of the effective delay
\begin{equation*}
\Psi (t,\,h)~=~D_{p}(t,\,h)+\mathcal{F}\big(t+D_{p}(t,\,h)-T_{A}\big)
\end{equation*}%
Note that $\mathcal{F}(\cdot)$ is continuous, the following uniform convergence follows by Lemma \ref{techlemma}
$$
\mathcal{F}\big(t+D_p(t,\,h_{\nu})-T_A\big)~\longrightarrow~\mathcal{F}\big(t+D_p(t,\,h)-T_A\big)  \qquad \nu~\longrightarrow~\infty
$$
We conclude that the effective delay $\Psi_p(\cdot,\,h_{\nu})$ converges uniformly  to $\Psi_p(\cdot,\,h)$. The
desired convergence in the $\Vert \cdot \Vert _{\mathcal{L}^{2}}$ norm now
follows since the interval $[t_{0},\,t_{f}]$ is compact.
\end{proof}

\subsection{Alternative definition of effective path delay}\label{bddsec} 

The integrals employed in defining the feasible domain (\ref%
{feasible}) are not enough to assure bounded path flows $h_{p},\,p\in 
\mathcal{P}$. This observation is the fundamental hurdle to providing
existence of the DUE solution. One of the main accomplishments of this paper
is to address the boundedness of path flows not only for the proof of
existence result but also for future analysis and estimation of network
flows. In this section, we will present an alternative formulation of the
effective path delay $\Psi _{p}(t,\,h)$, where that alternative formulation
will facilitate our analysis leading to the proof of our main result,
Theorem \ref{VIthm}.

As a motivation, let us recall the effective delay operator 
\begin{equation}
\Psi _{p}(t,\,h)~\doteq ~D_{p}(t,\,h)+\mathcal{F}\big(t+D_{p}(t,\,h)-T_{A}%
\big)  \label{ed}
\end{equation}%
 In order to simplify our analysis, it is convenient to rewrite (\ref{ed}) in a slightly different
form. In particular, for each O-D pair $(i,\,j)\in \mathcal{W}$, let us
introduce the cost function $\phi _{ij}(\cdot ):[t_{0},\,t_{f}]\rightarrow 
\mathcal{\Re }_{+}^{1}$, which is a function of departure time, and $\,\psi _{ij}(\cdot ):[t_{0},\,t_{f}]\rightarrow \mathcal{\Re }_{+}^{1}$, which is a function of arrival time. As we shall explain below, the users' travel costs can be alternatively expressed using functions $\phi_{ij}(\cdot)$ and $\psi_{ij}(\cdot)$.  Given any origin-destination pair $(i,\,j)\in\mathcal{W}$, and any driver who departs from the origin at time $t_d$, and arrives at destination at $t_a$, his/her travel cost is expressed as $\phi_{ij}(t_d)+\psi_{ij}(t_a)$. 

Fix any vector of path flows $%
h\in \Lambda $, recall the path exit time function $\tau_{p}(t)=t+D_p(t,\,h)$ where $t$ denotes departure time. Then (\ref{ed}) can be equivalently written as 
\begin{equation}\label{reveqn1}
\Psi _{p}(t,\,h)~=~ -t+\tau_p(t)+\mathcal{F}\big(\tau_p(t)-T_{A}%
\big)~=~ \phi _{ij}(t)+\psi _{ij}\big(\tau _{p}(t,\,h)\big)
\end{equation}
where 
\begin{equation}\label{vppsidef}
\phi _{ij}(t)~\doteq ~-t,\qquad \psi _{ij}(\tau_p(t))~\doteq ~\tau_p(t)+\mathcal{F}\big(%
\tau_p(t)-T_{A}\big)  
\end{equation}

\begin{remark}
In \cite{BH, BH1, BH2}, the unit travel cost is measured in terms of $\phi_{ij}(\cdot)$ and $\psi_{ij}(\cdot)$. In other words, the general effective
delay (\ref{ed}) can be alternatively evaluated as a sum of costs at the
beginning and at the end of each driver's trip.
\end{remark}

\noindent In Section \ref{visec} that follows, we will exploit this alternative
representative of effective path delay (cost) to establish existence. To prepare for the existence proof, we consider a general network $G(N, \, A)$, and associate to each O-D pair $(i,\,j)$ the pair of cost functions $\phi _{ij}(\cdot )$ and $\psi _{ij}(\cdot )$. We make the following two assumptions regarding $\phi_{ij}(\cdot )$ and $\psi _{ij}(\cdot )$ and the underlying link performance model.\\

\noindent {\bf A1.}  For each $(i,\,j)\in\mathcal{W}$, $\phi _{ij}(\cdot )$ and $\psi
_{ij}(\cdot )$ are continuous on $[t_{0},\,t_{f}]$. Moreover, $\phi_{ij}(\cdot)$ is monotonically decreasing while $\psi_{ij}(\cdot)$ is monotonically increasing. In
addition, we assume that $\phi_{ij}(\cdot)$ is Lipschitz continuous with constant $L_{ij}$; and there exists $\Delta_{ij}>0$ such that
\begin{equation}
\psi_{ij}(t_2)-\psi_{ij}(t_1)~\geq~\Delta_{ij}(t_2-t_1)  \qquad \forall~t_0\leq t_1<t_2\leq t_f
\label{a1eqn}
\end{equation}%

\noindent {\bf A2.}  Each link $a\in A$ of the network has
a finite exit flow capacity $M_{a}~<~\infty $. \\

\noindent Inequality (\ref{a1eqn}) requires that the arrival cost function $\psi _{ij}(\cdot )$
is strictly increasing, and the rate of increase is bounded below by $\Delta_{ij}$. In the case where $\psi_{ij}(\cdot)$ is continuously differentiable, this assumption is equivalent to requiring that ${d\over dt}\psi_{ij}(t)\geq \Delta_{ij}>0~~ t\in[t_0,\,t_f]$, which is further equivalent to ${d\over dt}\psi_{ij}(t)>0~~t\in[t_0,\,t_f]$, due to compactness.  As a special case, given the effective delay of the form (\ref{reveqn1}) and (\ref{vppsidef}),  {\bf A1} reduces to the following assumptions.\\

\noindent {\bf A1'.} $\mathcal{F}(\cdot )$ is
continuous on $[t_0,\,t_f]$ and satisfies 
$$
\mathcal{F}(t_2)-\mathcal{F}(t_1)~\geq~\Delta (t_2-t_1) \qquad \forall t_0~\leq~t_1~<~t_2~\leq~t_f
$$
for some $\Delta >-1$.\\

\noindent Assumption {\bf A2} applies to all link dynamics that impose an exit flow capacity; examples include the Vickrey model \citep{Vickrey} and the Lighthill-Whitham-Richards model  \citep{LW, Richards, LKWM}.

\begin{remark}
Our proposed formulation of travel cost subsumes another well-known class of travel cost functions that are employed in, e.g. \cite{Pang} and \cite{Yao}. Namely, given positive constants $\alpha,\,\beta$ and $\gamma$, the travel cost of road user is expressed as
\begin{equation}\label{abgmodel}
\alpha(t_a-t_d)+\begin{cases} \beta (T-t_a) \qquad t_a\leq T_A\\ 
\gamma (t_a-T) \qquad t_a> T_A\end{cases}
\end{equation}
where $t_d,\,t_a$ denote departure and arrival times, respectively. $T_A$ is the desired arrival time. It is assumed that $\gamma>\alpha>\beta$.  The expression in \eqref{abgmodel} can be  rewritten as
\begin{align*}
\alpha(t_a-t_d)+\begin{cases} \beta (T_A-t_a) \qquad t_a\leq T_A\\ 
\gamma (t_a-T_A) \qquad t_a> T_A\end{cases}&~=~-\alpha\, t_d+\begin{cases}(\alpha-\beta)t_a+\beta T_A\qquad t_a\leq T_A\\
(\alpha+\gamma)t_a-\gamma T_A\qquad t_a>T_A\end{cases} \\&~=~\phi(t_d)+\psi(t_a)
\end{align*}
where 
$$
\phi(t_d)~\doteq~-\alpha\,t_d,\qquad \psi(t_a)~\doteq~\begin{cases}(\alpha-\beta)t_a+\beta T_A\qquad t_a\leq T_A\\
(\alpha+\gamma)t_a-\gamma T_A\qquad t_a>T_A\end{cases}
$$
One can easily check that such $\phi(\cdot)$ and $\psi(\cdot)$ satisfy assumption {\bf A1}. 
\end{remark}

In view of the preceding assumptions {\bf A1} and {\bf A2}, we are prompted to define the following: 
\begin{equation}\label{phimaxdef}
\phi_{max}'~\doteq~\max_{(i,\,j)\in\mathcal{W}}L_{ij}~>~0
\end{equation}
\begin{equation}
\psi _{min}^{\prime }~\doteq ~\min_{(i,\,j)\in \mathcal{W}}\Delta_{ij}~>~0  \label{psimindef}
\end{equation}
\begin{equation}
M^{max}~\doteq ~\max_{a\in A}M_{a}~<~+\infty 
\label{fmaxdef}
\end{equation}%

\subsection{\label{visec}Existence of solution to the variational inequality}

The classical result explained by Theorem \ref{mainthm} will
be the key ingredient for the proof of existence of the DUE solution. Using
the same notation as in Theorem \ref{mainthm}, the underlying topological
vector space $E$ will be instantiated by $\big(\mathcal{L}^{2}[t_{0},\,t_{f}]\big)^{|\mathcal{P}|}$, which is a locally convex
topological vector space. The dual space $E^{\ast }$ will be again $\big(%
\mathcal{L}^{2}[t_{0},\,t_{f}]\big)^{|\mathcal{P}|}$.

\begin{theorem}
\label{VIthm}\textbf{(Existence of DUE)} Let assumptions {\bf A1} and {\bf A2} hold.
In addition, assume that the effective delay operator $\Psi: \Lambda\rightarrow \big(\mathcal{L}^2[t_0,\,t_f]\big)^{|\mathcal{P}|}$ is continuous. Then the dynamic user equilibrium problem as in Definition \ref{duedef} has a solution.
\end{theorem}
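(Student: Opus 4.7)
The plan is to apply Theorem \ref{mainthm} (Browder) not to $\Lambda$ directly---which fails to be compact in the Hilbert space $(\mathcal{L}^2[t_0,t_f])^{|\mathcal{P}|}$---but to a sequence of compact, convex, finite-dimensional approximations, and then to pass to a subsequential limit. For each $n$, partition $[t_0, t_f]$ into $n$ equal subintervals and let $E_n$ denote the finite-dimensional subspace of vector-valued step functions constant on this partition. Define
$$
\Lambda_n ~=~ \Big\{h \in E_n \cap \Lambda :~ 0 \leq h_p(t) \leq K_n ~\text{for all}~ p,\, t\Big\},
$$
where $K_n \to \infty$. Each $\Lambda_n$ is compact and convex in the ambient finite-dimensional Euclidean topology, the restriction of $\Psi$ to $\Lambda_n$ is continuous by Theorem \ref{contthm}, and Theorem \ref{mainthm} then produces $h_n^* \in \Lambda_n$ satisfying $\langle \Psi(\cdot, h_n^*),\, h - h_n^*\rangle \geq 0$ for all $h \in \Lambda_n$.

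The central step, and the main obstacle, is establishing a uniform $\mathcal{L}^\infty$-bound on $\{h_n^*\}$ independent of $n$. Here is where assumptions \emph{(A1')} and \emph{(A2)} become essential. The idea is that if $h_{n,p}^*$ has pointwise values significantly exceeding $F^{\max}$ (see (\ref{fmaxdef})) on an interval of positive measure, the closed-form queueing representation (\ref{pqm5})--(\ref{pqm6}) forces the path exit time $\tau_p(t, h_n^*)$ to grow proportionally to the accumulated excess inflow. By \emph{(A1')} and the positivity of $\psi'_{\min}$ in (\ref{psimindef}), this inflates the effective cost $\Psi_p(t, h_n^*) = \phi_{ij}(t) + \psi_{ij}\big(\tau_p(t, h_n^*)\big)$ without bound. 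A direct redistribution of the offending mass to a less congested departure time---always available inside $\Lambda_n$ once $K_n$ exceeds a threshold and $n$ is fine enough to resolve the redistribution---would then strictly decrease $\langle \Psi(\cdot, h_n^*),\, h\rangle$, contradicting the discrete VI. This yields a constant $\bar K$, depending only on $F^{\max}$, $\psi'_{\min}$, the demands $\{Q_{ij}\}$, and $[t_0, t_f]$, such that $\|h_{n,p}^*\|_\infty \leq \bar K$ whenever $K_n > \bar K$.

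Given this uniform bound, Banach--Alaoglu extracts a subsequence $h_{n_k}^* \rightharpoonup h^*$ weakly in $(\mathcal{L}^2)^{|\mathcal{P}|}$, with $h^* \in \Lambda$ since $\Lambda$ is convex and strongly closed, hence weakly closed. The $\mathcal{L}^\infty$-bound makes the cumulative inflows $U_{a, n_k}$ uniformly Lipschitz; combined with the pointwise weak limits $U_{a, n_k}(t) = \langle h_{n_k}^*,\, \chi_{[t_0,t]} \cdot e\rangle \to U_a(t)$, Arzel\`a--Ascoli upgrades this to uniform convergence $U_{a, n_k} \to U_a$ on $[t_0,t_f]$. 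Parts 2--5 of the proof of Theorem \ref{contthm} then give $\Psi(\cdot, h_{n_k}^*) \to \Psi(\cdot, h^*)$ in $(\mathcal{L}^2)^{|\mathcal{P}|}$. Using density of $\bigcup_n E_n$ in $(\mathcal{L}^2[t_0,t_f])^{|\mathcal{P}|}$, passage to the limit in the discrete VI yields $\langle \Psi(\cdot, h^*),\, h - h^*\rangle \geq 0$ for all $h \in \Lambda$, so $h^*$ solves $VI(\Psi, \Lambda, [t_0, t_f])$; equivalence with Definition \ref{duedef} established in Friesz et al.~(1993) completes the proof.
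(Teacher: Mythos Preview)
Your proposal is correct and follows essentially the same four-step strategy as the paper: finite-dimensional step-function approximations $\Lambda_n$, Browder's theorem on each, a uniform $L^\infty$ bound on the discrete solutions $h_n^*$ derived from assumptions (A1') and (A2), and passage to a weak subsequential limit. Two minor remarks: the artificial cap $K_n$ is unnecessary since the demand constraint already forces each $\Lambda_n$ to be compact (the paper uses this directly), and your Arzel\`a--Ascoli step upgrading weak convergence of $h_{n_k}^*$ to uniform convergence of the cumulative counts $U_a$ is actually more careful than the paper's own limit argument, which somewhat glosses over why strong continuity of $\Psi$ suffices when $h^{n,*}$ converges only weakly.
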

\begin{proof}
The proof is divided into four parts.

\noindent \textbf{Part 1.} Our strategy for demonstrating existence is to adapt
Theorem \ref{mainthm} to the locally convex topological vector space $\big(\mathcal{L}^{2}[t_{0},\,t_{f}]\big)^{|\mathcal{P}|}$, and its subset $\Lambda $. By assumption, the map $h\mapsto \Psi (\cdot ,\,h)$ is continuous from $\Lambda $ to the space of $\big(\mathcal{L}^{2}[t_{0},\,t_{f}]\big)^{|\mathcal{P}|}$. If $\Lambda $ were compact and
convex, we would immediately demonstrate the desired result. However, $%
\Lambda $ is bounded, closed and convex, but not compact in $\big(\mathcal{L}^{2}[t_{0},\,t_{f}]\big)^{|\mathcal{P}|}$.

\noindent \textbf{Part 2.} We will instead employ finite-dimensional approximations of 
$\Lambda $. In order to proceed, consider for each $n\geq 1$ the uniform
partition of interval $[t_{0},\,t_{f}]$ with $2^{n}$ sub-intervals \footnote{The choice of the number of sub-intervals is quite flexible: as long as the size of the sub-intervals approaches zero as $n\rightarrow+\infty$, our framework for showing existence will work.}
\begin{align*}
t_{0}~& =~t^{0}~<~t^{1}~<~t^{2}~\ldots ~<~t^{2^{n}}~=~t_{f} \\
t^{i}-t^{i-1}~& =~{\frac{t_{f}-t_{0}}{2^{n}}}\qquad i=1,\ldots ,2^{n}
\end{align*}%
Then consider the following sequence of finite-dimensional subsets 
\begin{equation}
\Lambda _{n}~\doteq ~\left\{ h\in \Lambda :\quad h_{p}(\cdot )\hbox{ is
constant on }\lbrack t^{i-1},\,t^{i}) \qquad p\in \mathcal{P}\right\}
~\subset~\Lambda  \qquad n\geq 1  \label{subspace}
\end{equation}%
We claim that for each $n\geq 1$, $\Lambda _{n}$ is compact and convex in $%
\big(\mathcal{L}^{2}[t_{0},\,t_{f}]\big)^{|\mathcal{P}|}$. Indeed, given
any $h^{n,1},\,h^{n,2}\in \Lambda _{n}$, and $\alpha \in \lbrack 0,\,1]$, $%
\alpha \,h^{n,1}+(1-\alpha )\,h^{n,2}$ is clearly nonnegative and constant
on each $[t^{i-1},\,t^{i}]$ for$\,i=1,\ldots ,2^{n}$. In addition, for any
origin-destination pair $(i,\,j)\in \mathcal{W}$, by definition (\ref%
{feasible}), 
\begin{equation*}
\sum_{p\in \mathcal{P}_{ij}}\int_{t_{0}}^{t_{f}}\alpha
\,h^{n,1}(t)+(1-\alpha )\,h^{n,2}(t)\,dt~=~\alpha \,Q_{ij}+(1-\alpha
)\,Q_{ij}~=~Q_{ij}
\end{equation*}%
This verifies that $\Lambda _{n}$ is convex. To see the compactness of $\Lambda_n$, we define
the map $\mu :\Lambda _{n}\rightarrow \mathcal{\Re }_{+}^{2^{n}\times |%
\mathcal{P}|},\,h\mapsto (a_{i,\,p}:\,1\leq i\leq 2^{n},\,\,p\in \mathcal{P})
$ where each vector $(a_{1,p},\ldots ,a_{2^{n},p})$ is the coordinate of $%
h_{p}$ under the natural basis $\{e_{n}^{i}\}_{i=1}^{2^{n}}$, where 
\begin{equation*}
e_{n}^{i}(t)~=~%
\begin{cases}
1\qquad  & t\in \lbrack t^{i-1},\,t^{i}) \\ 
0\qquad  & \hbox{else}%
\end{cases}%
\end{equation*}%
Clear, the map $\mu $ is one-to-one. Now fix $n$ and consider an arbitrary sequence $\{h^{n,\nu}\}_{\nu \geq 1}\subset \Lambda _{n}$, as well as the sequence of their
images $\big\{\mu \big(h^{n, \nu}\big)\big\}_{\nu \geq 1}\subset 
\mathcal{\Re }_{+}^{2^{n}\times |\mathcal{P}|}$. By (\ref{feasible}), the latter sequence is uniformly bounded by the following
quantity 
\begin{equation*}
\max_{(i,j)\in\mathcal{W}}{\frac{2^{n}\,Q_{ij}}{t_{f}-t_{0}}}
\end{equation*}%
According to the Bolzano-Weierstrass theorem, there exists a convergent subsequence $\big\{\mu (h^{n, \nu\,'})\big\}_{\nu\,' \geq 1}$ with limit $y^*\in\Re_+^{2^n\times|\mathcal{P}|}$. By
construction, the corresponding subsequence $\big\{h^{n, \nu\,')}\big\}_{\nu
\,'\geq 1}$ must converge uniformly to $\hat h^n\doteq \mu^{-1}(y^*)$. In view
of the compact interval $[t_{0},\,t_{f}]$, we conclude that this convergence
holds also in the $\Vert \cdot \Vert _{\mathcal{L}^{2}}$ norm. It now remains
to show $\hat{h}^{n}\in \Lambda _{n}$. Clearly $\hat{h}^{n}\geq 0$ and is
constant on the sub-intervals $[t^{i-1},\,t^{i}),\,i=1,\ldots ,2^{n}$.
Moreover, since $h^{n, \nu\,'}$ are uniformly bounded, by the
Dominated Convergence Theorem, 
\begin{equation*}
Q_{ij}~=~\lim_{\nu \,^{\prime }\rightarrow \infty }\sum_{p\in \mathcal{P}_{ij}}\int_{t_{0}}^{t_{f}}h_{p}^{n, \nu\,'}(t)\,dt~=~\sum_{p\in 
\mathcal{P}_{ij}}\int_{t_{0}}^{t_{f}}\hat{h}_{p}^{n}(t)\,dt  \qquad \forall (i,\,j)\in\mathcal{W}
\end{equation*}
This implies $\hat{h}^{n}\in \Lambda _{n}$. Thereby the sequential compactness, and hence the compactness, of $\Lambda_n$ is substantiated.

\noindent \textbf{Part 3.} For each $n\geq 1$, apply Theorem \ref{mainthm} to $\Lambda
_{n}$ and obtain $h^{n,\ast }\in \Lambda _{n}$ such that 
\begin{equation}
\Big<\Psi \big(\cdot ,\,h^{n,\ast }\big),\,h^{n}(\cdot )-h^{n,*}(\cdot )%
\Big>~\geq ~0  \qquad \forall ~h^{n}\in \Lambda _{n}  \label{finite}
\end{equation}%
Where the $\big<\,,\,\big>$ is the inner product (duality) defined in (\ref%
{nnorm}). Inequality (\ref{finite}) implies that given any $p\in\mathcal{P}$, if $%
h_{p}^{n,\ast }(t)>0,~t\in \lbrack t^{j},\,t^{j+1})$, then the following is true.
\begin{equation}
\int_{t^{j}}^{t^{j+1}}\Psi_p (t,\,h^{n,\ast })\,dt~=~\min_{0\leq k\leq
2^{n}}\int_{t^{k}}^{t^{k+1}}\Psi_p (t,\,h^{n,\ast })\,dt  \label{viimply}
\end{equation}%
In view of \eqref{phimaxdef}, (\ref{psimindef}) and (\ref{fmaxdef}), we choose any constant $\mathcal{M}$ such that 
\begin{equation}
\mathcal{M}~>~{\frac{3M^{max}\phi_{max}'}{\psi _{min}^{\prime }}}  \label{upperbound}
\end{equation}%
We then claim that $h_{p}^{n,\ast}(t)\leq \mathcal{M},\,\forall t\in [t_{0},\,t_{f}],\,\forall p\in \mathcal{P},\,\forall n\geq 1$. Otherwise, assume there exists some $p\in \mathcal{P}$, some $\nu \geq 1$
and some $0\leq i\leq 2^{\nu }$ with 
\begin{equation*}
h^{v,\ast }(t)~\equiv ~\eta ~>~\mathcal{M}\qquad t\in \lbrack
t^{i},\,t^{i+1})
\end{equation*}
By choosing $[t_{0},\,t_{f}]$ large enough, we can assume that $i\geq 1$.
Now consider the interval $[t^{i-1},\,t^{i}]$, and the quantity $%
\displaystyle\sup_{t\in \lbrack t^{i-1},t^{i}]}\Psi_p (t,\,h^{v,\ast })$. By
possibly modifying the value of the function $\Psi (\cdot ,\,h^{v,\ast })$
at one point, we can obtain $t^{\ast }\in \lbrack t^{i-1},\,t^{i}]$ such
that 
\begin{equation*}
\Psi_p (t^{\ast },\,h^{v,\ast })~=~\sup_{t\in \lbrack t^{i-1},t^{i}]}\Psi_p
(t,\,h^{v,\ast })
\end{equation*}%
Now let $\tau _{p}(t,\,h^{v,\ast })=t+D_{p}(t,\,h^{v,\ast })$ be the path arrival time function. According to FIFO and assumption {\bf A2}, we
deduce that for any $t\in \lbrack t^{i},\,t^{i+1}]$, 
\begin{equation}
\tau _{p}(t,\,h^{v,\ast })-\tau _{p}(t^{\ast },\,h^{v,\ast })~\geq ~{\frac{%
(t-t^{i})\,\eta }{M^{max}}}  \label{boundedeqn1}
\end{equation}%
this, together with (\ref{psimindef}) implies that 
\begin{equation}
\psi _{ij}\big(\tau _{p}(t,\,h^{v,\ast })\big)-\psi _{ij}\big(\tau
_{p}(t^*,\,h^{v,\ast })\big)~\geq ~\psi _{min}^{\prime }\big(\tau
_{p}(t,\,h^{v,\ast })-\tau _{p}(t^{\ast },\,h^{v,\ast })\big)~\geq ~\psi
_{min}^{\prime }\cdot {\frac{(t-t^{i})\,\eta }{M^{max}}}  \label{boundedeqn2}
\end{equation}%
Inequality (\ref{boundedeqn2})  implies 
\begin{equation}
\Psi _{p}(t,\,h^{v,\ast })-\Psi _{p}(t^{\ast },\,h^{v,\ast })~\geq ~{\frac{%
\psi _{min}^{\prime }\,\eta }{M^{max}}}(t-t^{i})-\phi_{max}'(t-t^{\ast }) \qquad \forall
~t\in \lbrack t^{i},\,t^{i+1}]  \label{boundedeqn3}
\end{equation}%
Integrating (\ref{boundedeqn3}) with respect to $t$ over interval $[t^{i},\,t^{i+1}]$ and a simple
calculation yield 
\begin{multline}
\int_{t^{i}}^{t^{i+1}}\Psi_p (t,\,h^{v,\ast })\,dt-(t^{i+1}-t^{i})\Psi_p
(t^{\ast },\,h^{v,\ast })\\
~\geq ~{\frac{(t^{i+1}-t^{i})^{2}}{2}}\cdot {\frac{%
\psi _{min}^{\prime }\,\eta }{M^{max}}}+(t^{i+1}-t^{i})\phi_{max}'\cdot
\left( t^{\ast }-{\frac{t^{i}+t^{i+1}}{2}}\right)   \label{boundedeqn4}
\end{multline}
Since $t^{\ast }\in \lbrack t^{i-1},\,t^{i}]$, we have $t^{\ast
}-(t^{i}+t^{i+1})/2\geq -3/2(t^{i+1}-t^{i})$. Inequality (\ref{boundedeqn4}) then implies
\begin{equation}
\int_{t^{i}}^{t^{i+1}}\Psi_p (t,\,h^{v,\ast })\,dt-(t^{i+1}-t^{i})\Psi_p
(t^{\ast },\,h^{v,\ast })~\geq ~{\frac{(t^{i+1}-t^{i})^{2}}{2}}\left( {\frac{%
\psi _{min}^{\prime }\,\eta }{M^{max }}}-3\phi_{max}'\right) ~>~0  \label{boundedeqn5}
\end{equation}%
This yields the following contradiction to (\ref{viimply}) and hence (\ref%
{finite}) 
\begin{equation*}
\int_{t^{i}}^{t^{i+1}}\Psi_p (t,\,h^{v,\ast
})\,dt~>~\int_{t^{i-1}}^{t^{i}}\Psi _{p}(t,\,h^{v,\ast })\,dt
\end{equation*}

\noindent \textbf{Part 4.} By previous steps, we have obtained a uniformly bounded
sequence of vector-valued functions $\big\{h^{n,\ast }\big\}_{n\geq 1}$
satisfying (\ref{finite}). By taking a subsequence if necessary, we can assume the weak
convergence in the Hilbert space $\big(L^2[t_0,\,t_f]\big)^{|\mathcal{P}|}$: 
\begin{equation*}
h^{n,\ast }~\longrightarrow ~h^{\ast }\qquad n~\rightarrow ~\infty 
\end{equation*}%
for some $h^{\ast }\in \Lambda $. We claim that for all $h\in \Lambda $, 
\begin{equation*}
\Big<\Psi \big(\cdot ,\,h^{\ast }\big),\,h(\cdot )-h^*(\cdot )\Big>%
~\geq ~0
\end{equation*}%
Indeed, given any $h\in \Lambda $, there exists piecewise-constant
approximation $\{h_{n}\in \Lambda _{n}\}_{n\geq 1}$, which converges to $h$
both point-wise and in the $\Vert \cdot \Vert _{\mathcal{L}^{2}}$ norm.
According to (\ref{finite}), we have 
\begin{equation}
\Big<\Psi _{p}\big(\cdot ,\,h^{n,\ast}\big),\,h_{n}(\cdot)- h^{n,\ast }(\cdot
)\Big>~\geq ~0  \label{eqn1}
\end{equation}%
Notice that the map $h\mapsto \Psi _{p}(\cdot ,\,h)$ is continuous with
respect to the $\Vert \cdot \Vert _{\mathcal{L}^{2}}$ norm, by the
continuity of the inner product, we pass (\ref{eqn1}) to the limit 
\begin{equation*}
\Big<\Psi _{p}\big(\cdot ,\,h^{\ast }\big),\,h(\cdot )-h^*(\cdot )\Big>%
~\geq ~0
\end{equation*}
\end{proof}

The next theorem establishes the existence of SRDC-DUE with generalized Vickrey model, which is an immediate consequence of Theorem \ref{contthm} and Theorem \ref{VIthm}.

\begin{theorem}\label{gvmthm}
Let assumption {\bf A1} hold, then there exists a dynamic user equilibrium as in Definition \ref{duedef}, with an embedded network loading procedure which employs the generalized Vickrey model.
\end{theorem}

\begin{remark}
Notice that assumption {\bf A2} is automatically satisfied by the Vickrey model and hence the GVM. Thus {\bf A2} is omitted from the statement of Theorem \ref{gvmthm}. 
\end{remark}

As our final result, we present an important characterization of the DUE solution $h^*$. That is,  $h_p^*(\cdot),~p\in\mathcal{P}$ are uniformly bounded by some constant. One should distinguish this statement from the assumption that the path flows are {\it a priori} bounded, which is made in the proof of existence \citep{ZM}. Such an informative result leads to further insights of network congestion games, and can be easily derived using analysis similar to {\bf Part 3} in the proof of Theorem \ref{VIthm}. 

\begin{proposition}\label{boundprop} Let {\bf A1} and {\bf A2} hold. Given a DUE solution $h^*(\cdot)\in\Lambda\subset\big(\mathcal{L}^2[t_0,\,t_f]\big)^{|\mathcal{P}|}$ in the sense of Definition \ref{duedef},  the following inequality must hold.
\begin{equation}\label{pwbound}
h_p^*(t)~\leq~{\phi_{max}'\over \psi'_{min}}\cdot M^{max}\qquad\forall t\in[t_0,\,t_f],\quad\forall p\in\mathcal{P}
\end{equation}
where constants $\phi'_{max},\,\psi'_{min}$ and $M^{max}$ are defined in \eqref{phimaxdef}, \eqref{psimindef} and \eqref{fmaxdef}.
\begin{proof}
Fix an origin-destination pair $(i,\,j)\in\mathcal{W}$. Consider any path $p\in\mathcal{P}_{ij}$, and any two departure times $t_0\leq t_1< t_2\leq t_f$ such that $h^*_p(t_1)>0$, $h^*_p(t_2)>0$. Call $\tau_1\leq \tau_2$ the corresponding arrival times after traversing path $p$. By definition, the travel cost for these two drivers must be the same. Recalling (\ref{reveqn1}) and (\ref{vppsidef}), we have
\begin{equation}\label{equalcost}
\phi_{ij}(t_1)+\psi_{ij}(\tau_1)~=~\phi_{ij}(t_2)+\psi_{ij}(\tau_2)
\end{equation}
By the first-in-first-out principle and assumption {\bf A2},
\begin{equation}\label{fifoa2}
\tau_2-\tau_1~\geq~{1\over M^{max}}\int_{t_1}^{t_2}h^*_p(t)\,dt
\end{equation}
Taken together, (\ref{equalcost}) and (\ref{fifoa2}) yield
\begin{align*}
(t_2-t_1)&~\geq~{1\over \phi_{max}'}\left(\phi_{ij}(t_1)-\phi_{ij}(t_2)\right)~=~{1\over \phi_{max}'}\left(\psi_{ij}(\tau_2)-\psi_{ij}(\tau_1)\right)\\
&~\geq~{\psi'_{min}\over \phi_{max}'}(\tau_2-\tau_1)~\geq~{\psi'_{min}\over \phi_{max}' \, M^{max}}\int_{t_1}^{t_2}h^*_p(t)\,dt
\end{align*}
We thus have
\begin{equation}
{\phi_{max}' \, M^{max}\over \psi'_{min}}~\geq~{1\over t_2-t_1}\int_{t_1}^{t_2}h_p^*(t)\,dt
\end{equation}
Since this is true for arbitrary $t_1<t_2$, the point-wise bound (\ref{pwbound}) must hold.
\end{proof}

\end{proposition}

\section{Conclusion}

We have established the existence of the continuous-time simultaneous
route-and-departure choice DUE for the generalized Vickrey model (GVM),
a generalization of the Vickrey model, and plausible regularity conditions
that are easy to check and rather weak. It is significant that ours is the
first DUE existence result without the {\it a priori} bounding of departure rates
(path flows) and the most general constraint relating path flows to the trip
table. In fact, our method of proof successfully overcomes two major hurdles
that have stymied other researchers:  

\begin{enumerate}
\item the set of feasible flows $\Lambda $ is intrinsically non-compact in the $\mathcal{L}^{2}$-space as well as in the $\mathcal{L}^{1}$-space; and

\item a direct topological argument requires \textit{a priori} bounds for
the path flows, where those bounds do not arise from any behavioral argument
or theory.
\end{enumerate}

\noindent Theorem \ref{VIthm} is a general result that subsumes all SRDC-DUE models regardless of the arc dynamics, flow propagation and arc delay function employed, as long as the effective delay operator is continuous. As follow-up research, the continuity of other network loading models will be identified which, when combined with the general result of Theorem \ref{VIthm}, leads to the existence results of DUE with different network performance models.

\bibliographystyle{model2-names}
\bibliography{<your-bib-database>}



\end{document}